\numberwithin{equation}{section}
\theoremstyle{plain}
\newtheorem{theorem}{Theorem}[section]
\newtheorem{lemma}{Lemma}[section]
\newcommand{\be}{\begin{equation}}
\newcommand{\ee}{\end{equation}}
\newcommand{\bea}{\begin{eqnarray}}
\newcommand{\eea}{\end{eqnarray}}
\newcommand{\eeas}{\end{eqnarray*}}
\newcommand{\beas}{\begin{eqnarray*}}
\numberwithin{equation}{section}
\newcommand{\trace}{\mathop{\operator@font Trace}}
\newcommand{\vspan}{\mathop{\operator@font Span}}
\newcommand{\Int}{\mathop{\operator@font Int}}
\newcommand{\grad}{\mathop{\operator@font grad}}
\newcommand{\diver}{\mathop{\operator@font div}}
\newcommand{\Ad}{\mathop{\operator@font Ad}}
\newcommand{\id}{\mathop{\operator@font id}}
\newcommand{\spec}{{\mathop{\operator@font Spec}}(\mathcal H)}
\begin{document}

\title{$\mathfrak D^\perp$-invariant real hypersurfaces in complex Grassmannians of rank two
}

\author{Ruenn-Huah Lee and Tee-How {Loo}\\
Institute of Mathematical Sciences \\
University of Malaya \\
50603 Kuala Lumpur, Malaysia.	\\ 
\ttfamily{rhlee063@hotmail.com },
\ttfamily{looth@um.edu.my}}

\date{}
\maketitle

\abstract{
Let $M$ be a real hypersurface in complex Grassmannians of rank two. Denote by $\mathfrak J$ the quaternionic K\"{a}hler structure of the ambient space,  $TM^\perp$ the normal bundle over $M$ and $\mathfrak D^\perp=\mathfrak JTM^\perp$.  
The real hypersurface $M$ is said to be $\mathfrak D^\perp$-invariant if $\mathfrak D^\perp$ is invariant under the shape operator of $M$.
We showed that if $M$ is $\mathfrak D^\perp$-invariant, then $M$ is Hopf. 
This improves the results of  Berndt and Suh in 
[{Int. J. Math.} \textbf{23}(2012) 1250103] and [{Monatsh. Math.} \textbf{127}(1999), 1--14].
We also classified $\mathfrak D^\perp$ real hypersurface in complex Grassmannians of rank two with constant principal curvatures.
}

\medskip\noindent
\emph{2010 Mathematics Subject Classification.}
Primary  53C25, 53C15; Secondary 53B20.

\medskip\noindent
\emph{Key words and phrases.}
complex Grassmannians of rank two. $\mathfrak D^\perp$-invariant real hypersurfaces. Hopf hypersurfaces. 


\section{Introduction}

Denote by   $\hat M^m(c)$ the compact complex Grassmannian $ SU_{m+2}/S(U_2U_m)$ of rank two 
 (resp.  noncompact complex Grassmannian $SU_{2,m}/S(U_2U_m)$ of  rank two) for $c>0$ (resp. $c<0$), 
where $c=\max||K||/8$ is a scaling factor for the Riemannian metric $g$ and $K$ is the sectional curvature for $\hat M^m(c)$.
It is well known that $\hat M^m(c)$ is a Riemannian symmetric spaces equipped with a K\"{a}hler structure $J$ and a quaternionic K\"{a}hler structure $\mathfrak J$.  
A tangent vector $X\in T_x\hat M^m(c)$, $x\in\hat M^m(c)$ is said to be \emph{singular} if either 
$JX\in\mathfrak JX$ or $JX\perp\mathfrak JX$,

Let $M$ be a connected real hypersurface in $\hat M^m(c)$. Then the K\"ahler structure $J$ and quarternionic K\"ahler structure $\mathfrak J$ naturally induce two subbundles $JTM^\perp$ and $\mathfrak JTM^\perp$ in the tangent bundle $TM$ over $M$. 
Denote by $A$ the shape operator on $M$.
In   \cite{berndt-suh} and  \cite{berndt-suh2},
 Berndt and Suh studied real hypersurfaces $M$ in $\hat M^m(c)$ under the conditions:
\begin{enumerate}
	\item[(I)] $A\mathfrak JTM^\perp\subset \mathfrak JTM^\perp$;
	\item[(II)] $AJTM^\perp\subset JTM^\perp$.
\end{enumerate}

Recall that a Hopf hypersurface is a real hypersurface which satisfies the condition (II).
Let $\mathfrak D^\perp=\mathfrak JTM^\perp$. 
A real hypersurface $M$ is said to be $\mathfrak D^\perp$-invariant if it satisfies the condition (I).
The following theorems provide a list of  possible real hypersurfaces satisfying these two conditions.

\begin{theorem}[\cite{berndt-suh}]\label{thm:+}
Let $M$ be a connected real hypersurface in $SU_{m+2}/S(U_2U_m)$, $m\geq3$. 
Then $M$ is Hopf and $\mathfrak D^\perp$-invariant 
if and only if
one of the following holds:
\begin{enumerate}
\item[$(A)$] $M$ is an open part of a tube around a totally geodesic $SU_{m+1}/S(U_2U_{m-1})$ in $SU_{m+2}/S(U_2U_m)$, or
\item[$(B)$] $M$ is an open part of a tube around a totally geodesic $\mathbb  HP_n=Sp_{n+1}/Sp_1Sp_n$ in $SU_{m+2}/S(U_2U_m)$,
                  where  $m=2n$ is even. 
\end{enumerate}
\end{theorem}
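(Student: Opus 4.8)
The plan is to prove both implications separately, and in both directions the central object is the adapted frame along $M$. Fix a unit normal $N$, set $\xi=-JN$ (the Reeb field) and $\xi_\nu=-J_\nu N$ for a local orthonormal basis $\{J_1,J_2,J_3\}$ of $\mathfrak J$, so that $\mathfrak D^\perp=\mathrm{span}\{\xi_1,\xi_2,\xi_3\}$ and $TM=\mathfrak D\oplus\mathfrak D^\perp$. The hypotheses translate into $A\xi=\alpha\xi$ (Hopf) and $A\mathfrak D^\perp\subset\mathfrak D^\perp$ ($\mathfrak D^\perp$-invariance). The key scalar invariants are the functions measuring how $\xi$ sits relative to $\mathfrak D^\perp$, namely $g(\xi,\xi_\nu)=g(JN,J_\nu N)$; their vanishing or non-vanishing distinguishes the two singular types of the normal $N$ and will eventually separate cases $(A)$ and $(B)$.

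For the sufficiency direction I would argue by explicit construction. For $(A)$ the focal submanifold is the totally geodesic $SU_{m+1}/S(U_2U_{m-1})$ and for $(B)$ it is the totally geodesic $\mathbb{H}P^n$ (so $m=2n$); in each case the candidate $M$ is a tube of some radius $r$ around it. I would compute the shape operator of such a tube by solving the Jacobi equation along the normal geodesics emanating from the focal set, using the explicit curvature tensor of $\hat M^m(c)$. The point is that the normal space of the focal submanifold is a singular one (type $JN\in\mathfrak JN$ for $(A)$, type $JN\perp\mathfrak JN$ for $(B)$), so the quaternionic and complex structures act block-diagonally on the Jacobi fields; reading off the eigenspace decomposition then shows directly that $A$ preserves both $\mathbb R\xi$ and $\mathfrak D^\perp$, giving $(\mathrm{I})$ and $(\mathrm{II})$.

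For the necessity direction, assume $M$ is Hopf and $\mathfrak D^\perp$-invariant. I would feed the Hopf condition and the invariance condition into the Codazzi equation, whose right-hand side is the tangential part of the ambient curvature $\bar R(X,Y)N$ written out via the explicit form of the curvature tensor of $\hat M^m(c)$. Differentiating $A\xi=\alpha\xi$ and comparing with Codazzi should first yield that $\alpha$ is constant and that the functions $g(\xi,\xi_\nu)$ satisfy an overdetermined first-order system. The crucial claim is that this system forces $N$ to be singular everywhere, that is, either $\xi\in\mathfrak D^\perp$ or $\xi\perp\mathfrak D^\perp$. Once singularity is established, I would identify $M$ with a tube by constructing the focal map: along the normal geodesic flow the principal curvature spaces have constant dimension, so the focal set is a smooth totally geodesic submanifold, and comparison of its isotropy representation with the singular type of $N$ pins it down to $SU_{m+1}/S(U_2U_{m-1})$ in the first case and to $\mathbb{H}P^n$ (forcing $m=2n$) in the second; $M$ is then recovered as a tube of the corresponding radius.

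The main obstacle, and the heart of the argument, is the step that rules out a generic (non-singular) normal, i.e. showing that the angle functions $g(\xi,\xi_\nu)$ are forced to their extreme values. Generically $JN$ makes an intermediate angle with $\mathfrak JN$, and neither condition alone excludes this; it is only the interaction of the Hopf condition with $\mathfrak D^\perp$-invariance, mediated through the Codazzi equation and the quaternionic terms of the curvature tensor, that collapses the possibilities. Carrying this out requires tracking the $\phi_\nu\xi$ and $\phi\xi_\nu$ terms carefully and exploiting the closure relations among $J$ and the $J_\nu$; the reconstruction of the focal submanifold and its identification with the specific symmetric spaces in $(A)$ and $(B)$ is then comparatively standard once the singular type of $N$ is known.
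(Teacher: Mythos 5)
This theorem is quoted from Berndt and Suh (the compact-case classification, \cite{berndt-suh2}) and the present paper contains no proof of it, so there is no internal argument to compare against; your outline --- computing the shape operators of the two candidate tubes via Jacobi fields along normal geodesics for sufficiency, and for necessity feeding the Hopf and $\mathfrak D^\perp$-invariance conditions into the Codazzi equation to force the normal $N$ to be singular (the dichotomy $\xi\in\mathfrak D^\perp$ versus $\xi\perp\mathfrak D^\perp$), then reconstructing the totally geodesic focal submanifold and identifying it by the singular type --- is precisely the strategy of the original Berndt--Suh proof. At sketch level it is correct, and you rightly locate the crux in the singularity dichotomy, which is exactly the kind of Codazzi manipulation this paper carries out in Section~3 when it removes the Hopf hypothesis altogether.
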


\begin{theorem}[\cite{berndt-suh2}]\label{thm:-}
Let $M$ be a connected real hypersurface in $SU_{2,m}/S(U_2U_m)$, $m\geq2$
Then $M$ $M$ is Hopf and $\mathfrak D^\perp$-invariant
 if and only if
one of the following holds:
\begin{enumerate}
\item[$(A)$] $M$ is an open part of a tube around a totally geodesic $SU_{2,m-1}/S(U_2U_{m-1})$ in $SU_{2,m}/S(U_2U_m)$, or
\item[$(B)$] $M$ is an open part of a tube around a totally geodesic $\mathbb  HH_n=Sp_{1,n}/Sp_1Sp_n$ in $SU_{2,m}/S(U_2U_m)$,
                  where  $m=2n$ is even, or 
\item[$(C_1)$] $M$ is an open part of a horosphere in $SU_{2,m}/S(U_2U_m)$ whose center at infinity is singular and of 
type $JX\in  \mathfrak JX$, or
\item[$(C_2)$]	$M$ is an open part of  a horosphere in $SU_{2,m}/S(U_2U_m)$ whose center at infinity is singular and of type $JX\perp  \mathfrak JX$, 	or 							
\item[$(D)$] the normal vectors $N$  of $M$ at each point $x\in M$ is singular of type $JN\perp \mathfrak JN$. Moreover, $M$ has at least four distinct principal curvatures, which are given by
\[
\alpha=2\sqrt{-c}, \quad \gamma=0, \quad \beta=\sqrt{-c}, 
\]
with corresponding principal curvature spaces
\begin{align*}
&T_\alpha=JTM^\perp\oplus\mathfrak JTM^\perp,  
&T_\gamma=\mathfrak JJTM^\perp,  ~~
&T_\beta\perp JTM^\perp\oplus\mathfrak JTM^\perp\oplus\mathfrak JJTM^\perp.
\end{align*}
If $\mu$ is another (possibly nonconstant) principal curvature function, then 
$JT_\mu\subset T_\beta$ and 
$\mathfrak JT_\mu\subset T_\beta$. 
\end{enumerate}
\end{theorem}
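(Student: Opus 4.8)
The plan is to read off the differential-geometric consequences of conditions (I) and (II) from the Gauss and Codazzi equations, and then to split the classification according to the singular type of the unit normal $N$. First I would fix a local orthonormal frame $J_1,J_2,J_3$ of $\mathfrak J$ and put $\xi=-JN$ and $\xi_\nu=-J_\nu N$, so that $JTM^\perp=\vspan\{\xi\}$ and $\mathfrak D^\perp=\mathfrak JTM^\perp=\vspan\{\xi_1,\xi_2,\xi_3\}$. Writing $JX=\phi X+\eta(X)N$ and $J_\nu X=\phi_\nu X+\eta_\nu(X)N$ for the induced almost contact structures, condition (II) becomes the Hopf relation $A\xi=\alpha\xi$ with $\alpha=g(A\xi,\xi)$, while condition (I) gives $A\xi_\nu=\sum_\mu a_{\nu\mu}\xi_\mu$ for a symmetric matrix of functions $(a_{\nu\mu})$. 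I would then record the Weingarten-type formulas $\nabla_X\xi=\phi AX$ and $\nabla_X\xi_\nu=\sum_\mu q_{\nu\mu}(X)\xi_\mu+\phi_\nu AX$, together with the explicit curvature tensor of $\hat M^m(c)$, whose tangential part feeds the Codazzi equation $(\nabla_XA)Y-(\nabla_YA)X=(\ol R(X,Y)N)^\top$.

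The key quantity is the function $f=\sum_\nu\eta(\xi_\nu)^2=\sum_\nu g(\xi,\xi_\nu)^2$, which measures the position of $\xi=-JN$ inside $\mathfrak D^\perp=\mathfrak JTM^\perp$: the normal is singular of type $JN\in\mathfrak JN$ exactly when $f\equiv1$, singular of type $JN\perp\mathfrak JN$ exactly when $f\equiv0$, and regular when $0<f<1$. I would differentiate $f$ and substitute the Codazzi identities obtained by inserting the pairs $(\xi,\xi_\nu)$ and $(X,\xi)$; combined with $A\xi=\alpha\xi$ and $A\xi_\nu\in\mathfrak D^\perp$, this should force $f$ to be locally constant and, after excluding intermediate values, equal to $0$ or $1$ on each connected component. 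Proving that no regular value of $f$ can persist is, I expect, the main obstacle: it requires squeezing enough algebraic information out of the quaternionic curvature terms to close the overdetermined system, and it is precisely here that the full interaction of $J$ with the three $J_\nu$ (the rank-two structure) must be used.

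Once $N$ is singular of a fixed type, I would determine the principal curvatures and their eigenspaces and match them to known models. When $JN\in\mathfrak JN$ (so $f\equiv1$ and one may take $\xi_1=\xi\in\mathfrak D^\perp$), the matrix $(a_{\nu\mu})$ and the restriction of $A$ to $JTM^\perp\oplus\mathfrak JTM^\perp$ reduce to an explicitly solvable form; substituting the resulting constant principal curvatures into Jacobi field theory along the normal geodesics locates the focal set and realizes $M$ either as a tube around a totally geodesic $SU_{2,m-1}/S(U_2U_{m-1})$ (case $(A)$) or, at a limiting value of the tube parameter, as the horosphere $(C_1)$. When $JN\perp\mathfrak JN$ (so $f\equiv0$ and $\xi\in\mathfrak D$), the same scheme yields the tube around $\mathbb{H}H_n$ (case $(B)$) and the horosphere $(C_2)$.

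The hard part of the noncompact case is that the type $JN\perp\mathfrak JN$ does not always lead to a homogeneous model. When the Codazzi equation fails to force all principal curvatures to be constant, I would only try to extract the rigid part of the structure, namely that $\alpha=2\sqrt{-c}$, $\beta=\sqrt{-c}$ and $\gamma=0$ hold on $T_\alpha=JTM^\perp\oplus\mathfrak JTM^\perp$, on $T_\gamma=\mathfrak JJTM^\perp$ and on $T_\beta$, and that every additional principal curvature $\mu$ satisfies $JT_\mu\subset T_\beta$ and $\mathfrak JT_\mu\subset T_\beta$; this gives case $(D)$ and completes the classification.
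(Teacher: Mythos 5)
This theorem is quoted in the paper as background and is not proved there: it is the Berndt--Suh classification for the noncompact case (in fact from the 2012 Int.\ J.\ Math.\ paper \cite{berndt-suh}, even though the theorem header cites \cite{berndt-suh2}), so your proposal can only be measured against the original source. At the level of strategy you have reconstructed that source correctly: translate (I) and (II) into $A\xi=\alpha\xi$ and $A\mathfrak D^\perp\subset\mathfrak D^\perp$, prove that the unit normal is singular, then use Jacobi-field and focal-set theory to realize $M$ as a tube or horosphere, with the non-rigid residue of the type $JN\perp\mathfrak JN$ producing case $(D)$.

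As a proof, however, the proposal has genuine gaps. First, the decisive step --- excluding a regular normal --- is exactly where you stop: ``this should force $f$ to be locally constant\ldots it is precisely here that the full interaction\ldots must be used'' is an acknowledgment of the difficulty, not an argument. In the source (and in the present paper's proof of its own Theorem~\ref{T1.5}, Case (ii), which handles the harder non-Hopf version) this is a \emph{pointwise algebraic} dichotomy obtained by substituting specific vectors such as $X=\xi_{a+1}$, $Y\in\mathfrak D$ into the Codazzi equation and exploiting the linear independence of $\{\xi,\xi_1,\xi_2,\xi_3,\phi\xi_1,\phi\xi_2,\phi\xi_3\}$, not an integration argument forcing $f$ to be locally constant; your sketch gives no mechanism for ``excluding intermediate values.'' Second, in the branch $JN\perp\mathfrak JN$ you never explain how the trichotomy among type $(B)$, type $(C_2)$ and case $(D)$ arises: in Berndt--Suh it comes from the standard Hopf identity $2A\phi A=\alpha(A\phi+\phi A)+2c\phi+(\text{quaternionic terms})$, which pins down the Hopf curvature --- when $\alpha^2\neq -4c$ the focal-map argument yields the tube $(B)$, while $\alpha=2\sqrt{-c}$ yields either the horosphere $(C_2)$ or the configuration $(D)$ with $\gamma=0$, $\beta=\sqrt{-c}$ and $JT_\mu\subset T_\beta$, $\mathfrak JT_\mu\subset T_\beta$. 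You merely restate the conclusions of $(D)$ as a goal (``extract the rigid part of the structure'') without deriving any of them. Third, the theorem is an equivalence and you only address the ``only if'' direction: the converse requires computing the shape operators of the models $(A)$, $(B)$, $(C_1)$, $(C_2)$ via Jacobi fields along normal geodesics and verifying that they are Hopf and $\mathfrak D^\perp$-invariant, and this is absent from the proposal.
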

Real hypersurfaces of type $A$, $B$, $C_1$ and $C_2$ in Theorem~\ref{thm:-} and that in  
Theorem~\ref{thm:+}
have been the main focus along this line in the past two decades.
Finding simplest conditions characterizing  real hypersurfaces in these theorems  (or any of its subclasses)  has become a key step in the study of 
such real hypersurfaces.

Observe that the unit normal vector field $N$ for real hypersurfaces $M$ appeared in these theorems  is singular.
In this line of thought, for a real hypersurface $M$ in $\hat M^m(c)$, Lee and Suh showed that if  $M$ is Hopf  and $N$ is singular of type 
$JN\perp\mathfrak JN$ everywhere, then it is $\mathfrak D^\perp$-invariant  (cf. \cite{lee-suh}, \cite{suh2}). 
On the other hand, for the case $c>0$, it was shown in \cite{lee-loo2} that the condition (II) is necessary for the condition (I) and $JN\in \mathfrak JN$
everywhere.

In this paper, we show that the condition (II) is unnecessary in these two theorems, that is, we show that 
\begin{theorem}\label{T1.5}
Let $M$ be a connected real hypersurface in $\hat M^m(c)$, $m\geq2$. If $M$ is $\mathfrak D^\perp$-invariant, then it satisfies the condition (II), that is, $M$ is Hopf.
\end{theorem}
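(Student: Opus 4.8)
The plan is to play the explicit curvature tensor $\bar{R}$ of $\hat M^m(c)$ against the Codazzi equation of $M$. Choose locally a canonical basis $\{J_1,J_2,J_3\}$ of $\mathfrak J$ with $J_\nu J_{\nu+1}=J_{\nu+2}$ (indices mod $3$), and recall $JJ_\nu=J_\nu J$. Put $\xi=-JN$ and $\xi_\nu=-J_\nu N$, so that $JTM^\perp=\mathbb R\xi$, $\mathfrak D^\perp=\mathrm{span}\{\xi_1,\xi_2,\xi_3\}$, and $TM=\mathfrak D^\perp\oplus\mathfrak D$ with $\mathfrak D=(\mathfrak D^\perp)^\perp\cap TM$; let $\phi,\phi_\nu$ be the tangential parts of $J,J_\nu$ and $q_\nu=g(\xi,\xi_\nu)$ the functions measuring how $\xi$ tilts into $\mathfrak D^\perp$. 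Hypothesis (I) reads $A\xi_\nu=\sum_\mu a_{\nu\mu}\xi_\mu$ for a symmetric matrix $(a_{\nu\mu})$, and since $A$ is self-adjoint it also satisfies $A\mathfrak D\subset\mathfrak D$. I would record at the outset the structure formulas $\nabla_X\xi=\phi AX$ and $\nabla_X\xi_\nu=\phi_\nu AX+(\text{terms in }\xi_\mu)$, the algebraic identities $\phi\xi_\nu=\phi_\nu\xi$ and $\phi_\rho\xi_\nu=\varepsilon_{\rho\nu\sigma}\xi_\sigma$, and the derivative identity $Xq_\nu=-2g(AX,\phi_\nu\xi)+(\text{connection terms})$.

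Write $\xi=\xi_0+\xi^\sharp$ with $\xi^\sharp=\sum_\nu q_\nu\xi_\nu\in\mathfrak D^\perp$ and $\xi_0\in\mathfrak D$. A short computation using $A\mathfrak D\subset\mathfrak D$ and (I) shows that the $\mathfrak D$-part of $A\xi$ is exactly $A\xi_0$, while $g(A\xi,\xi_\nu)=\sum_\mu a_{\nu\mu}q_\mu$; hence, setting $\alpha=g(A\xi,\xi)$, the Hopf condition $A\xi=\alpha\xi$ is equivalent to the two statements: (i) $A\xi_0=\alpha\xi_0$, and (ii) $\sum_\mu a_{\nu\mu}q_\mu=\alpha q_\nu$ for all $\nu$, i.e.\ $(q_\nu)$ is an eigenvector of $(a_{\nu\mu})$ with eigenvalue $\alpha$. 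To produce (i) and (ii) I would feed the pairs $(\xi,W)$ with $W\in\mathfrak D$, and $(\xi,\xi_\nu)$, into the Codazzi equation $(\nabla_X A)Y-(\nabla_Y A)X=(\bar{R}(X,Y)N)^{\top}$, expanding the right-hand sides with the explicit curvature tensor. For $Y=\xi$ its tangential part collapses (using $\eta(\xi)=1$, $\phi\xi=0$, $g(\xi,\phi_\rho\xi)=0$) to $c\big[\phi X+\sum_\rho q_\rho\phi_\rho X-\sum_\rho\eta_\rho(X)\phi_\rho\xi-3\sum_\rho g(X,\phi_\rho\xi)\xi_\rho\big]$, which is completely explicit; the left-hand sides, expanded through the structure formulas and (I), are linear in the derivatives of $\alpha$, $q_\nu$, $a_{\nu\mu}$ and in the unknowns $g(A\xi,\xi_\nu)$.

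The core of the argument is to show that this coupled system forces (i) and (ii). I expect the $(\xi,W)$ relations, paired against $\mathfrak D$, to cancel the derivative terms and yield that $A\xi_0$ is a multiple of $\xi_0$, giving (i); and the $(\xi,\xi_\nu)$ relations, where the antisymmetric factors $\varepsilon_{\rho\nu\sigma}$ from quaternionic multiplication organize the $\mathfrak D^\perp$-components, to pin down $g(A\xi,\xi_\nu)=\alpha q_\nu$ and to force the two eigenvalues to coincide. The main obstacle is the generic (non-singular) locus where every $q_\nu$ is nonzero and non-constant: there $\mathbb R\xi$, $\mathfrak D^\perp$ and $\mathfrak D$ are genuinely entangled, the mixing terms $\phi_\nu\phi$ generated by the two commuting structures $J$ and $J_\nu$ (the $J_\nu J$ summands of $\bar{R}$) refuse to decouple, and one must feed the derivative identity for $q_\nu$ back into the Codazzi relations so that the differential and the algebraic constraints close up. Arranging this so that a single computation remains valid for every nonzero $c$ — covering the compact and the noncompact Grassmannians at once — is the delicate bookkeeping the proof must control.
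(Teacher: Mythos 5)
Your proposal is a plan, not a proof, and the two steps on which everything hinges are exactly the ones left as expectations: you say you ``expect'' the $(\xi,W)$ relations to cancel derivative terms and yield $A\xi_0\parallel\xi_0$, and you explicitly flag the generic locus (all $q_\nu$ nonzero) as an unresolved obstacle. Moreover, the setup as you arrange it will not close up: taking $Y=\xi$ in the Codazzi equation produces the term $\nabla_X(A\xi)-A\phi AX$, and since $\xi$ is precisely \emph{not} known to be principal, $\nabla_X(A\xi)$ contains derivatives of the unknown $\mathfrak D$-valued field $A\xi_0$ — not merely derivatives of the scalars $\alpha$, $q_\nu$, $a_{\nu\mu}$ as you assert. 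These vector-valued derivative terms have no independent equation controlling them, so the ``coupled system'' is underdetermined and the bookkeeping you hope to control cannot be carried out from the pairs $(\xi,W)$ and $(\xi,\xi_\nu)$ alone. The paper instead feeds the \emph{principal} directions $\xi_a$ into Codazzi (after rotating the canonical basis so that $A\xi_a=\beta_a\xi_a$, rather than keeping a general symmetric matrix $(a_{\nu\mu})$ as you do); because $A\xi_a$ is explicitly known, all derivatives of $A$ convert into the algebraic identity (\ref{Long}), which is quadratic in $A$ through the term $A\phi_aA$.

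You also misidentify where the difficulty lies. The paper's first key point is that your ``main obstacle'' locus is empty: pairing $X=\xi_{a+1}$ and $X=\xi_{a+2}$ with $Y\in\mathfrak D$ in (\ref{Long}) yields two expressions for $(\beta_a-\beta_{a+1})q_{a+2}(Y)$, whose difference, projected onto $\mathfrak D$, forces $\eta_a(\xi)=0$ for all $a$ at any point where $\xi\notin\mathfrak D^\perp$; by connectedness one gets the clean dichotomy $\xi\in\mathfrak D^\perp$ everywhere or $\xi\in\mathfrak D$ everywhere. There is no ``entangled'' case to bookkeep. The second key point, absent from your sketch, is the mechanism that actually produces the Hopf condition in each pure case: from (\ref{Long}) one extracts the commutation $(\phi_aA\phi_a)A=A(\phi_aA\phi_a)$ on $\mathfrak D$, simultaneously diagonalizes $A$ and $\phi_aA\phi_a$, and then the eigenspace structure of $\theta_a$ on $\mathcal H$ (eigenvalues $\pm1$, with $\theta_b$ interchanging $\mathcal H_a(1)$ and $\mathcal H_a(-1)$ for $b\neq a$) forces either $\xi=\pm\xi_1$ (case $\xi\in\mathfrak D^\perp$) or $A\xi\perp\mathcal H$, which together with $A\phi\xi_a=0$ (Lemma~\ref{lem:Aphixi_a}) gives $A\xi\parallel\xi$ (case $\xi\in\mathfrak D$). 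Without the dichotomy and without some substitute for the commutation/simultaneous-diagonalization argument, your outline has a genuine gap at its core.
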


It is worthwhile to remark that there is no known example for Case (D) in Theorem~\ref{thm:-}.
In \cite{berndt-suh}, Berndt and Suh conjectured that such a real hypersurface does not exist.
With the assumption that the principal curvatures being constant, 
we prove that the nonexistence of such real hypersurfaces and this  gives a partial answer to the conjecture. More precisely, we have 
\begin{theorem}\label{T2}
Let $M$ be a connected real hypersurface in $SU_{2,m}/S(U_2U_m)$, $m\geq2$. 
If $M$ is $\mathfrak D^\perp$-invariant  and has constant principal curvatures, then one of the following holds:
\begin{enumerate}
\item[$(A)$] $M$ is an open part of a tube around a totally geodesic $SU_{2,m-1}/S(U_2U_{m-1})$ in $SU_{2,m}/S(U_2U_m)$, or
\item[$(B)$] $M$ is an open part of a tube around a totally geodesic $\mathbb  HH_n=Sp_{1,n}/Sp_1Sp_n$ in $SU_{2,m}/S(U_2U_m)$,
                  where  $m=2n$ is even, or 
\item[$(C_1)$] $M$ is an open part of a horosphere in $SU_{2,m}/S(U_2U_m)$ whose center at infinity is singular and of 
type $JX\in  \mathfrak JX$, or
\item[$(C_2)$]	$M$ is an open part of  a horosphere in $SU_{2,m}/S(U_2U_m)$ whose center at infinity is singular and of type $JX\perp  \mathfrak JX$.
\end{enumerate}
\end{theorem}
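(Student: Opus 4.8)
The plan is to reduce Theorem~\ref{T2} to the nonexistence of type $(D)$ hypersurfaces with constant principal curvatures, and then to rule out type $(D)$ by a Gauss--Codazzi computation. First I would invoke Theorem~\ref{T1.5}: since $M$ is $\mathfrak D^\perp$-invariant it is automatically Hopf, so both conditions (I) and (II) hold and Theorem~\ref{thm:-} applies. Thus $M$ is locally of one of the types $(A)$, $(B)$, $(C_1)$, $(C_2)$, $(D)$. Types $(A)$--$(C_2)$ are exactly the asserted conclusions, so the entire content of the theorem is to show that case $(D)$ cannot occur once all principal curvatures are assumed constant. From here on I assume, for contradiction, that $M$ is of type $(D)$, and I record the eigenvalue data $\alpha=2\sqrt{-c}$, $\beta=\sqrt{-c}$, $\gamma=0$, together with the eigenspace description $T_\alpha=JTM^\perp\oplus\mathfrak D^\perp$, $T_\gamma=\mathfrak JJTM^\perp$, $T_\beta\perp JTM^\perp\oplus\mathfrak D^\perp\oplus\mathfrak JJTM^\perp$, the type condition $JN\perp\mathfrak JN$, and the constraint $JT_\mu,\mathfrak JT_\mu\subset T_\beta$ for any further principal curvature $\mu$.

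Next I would set up the structure equations. Choosing a local orthonormal frame $\{J_1,J_2,J_3\}$ of $\mathfrak J$ with induced tensors $\phi,\phi_\nu$, Reeb field $\xi=-JN$, and fields $\xi_\nu=-J_\nu N$ spanning $\mathfrak D^\perp$, I would write out the Gauss and Codazzi equations for $M$ in $\hat M^m(c)$ using the explicit ambient curvature tensor of the rank-two Grassmannian. The genuine leverage of the hypothesis is twofold: it promotes the a priori possibly nonconstant curvature $\mu$ to a constant, and it makes the eigenbundles $T_\alpha,T_\beta,T_\gamma,T_\mu$ smooth with all ``derivative of eigenvalue'' terms dropping out of the Codazzi equation. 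For eigenvector fields $X\in T_\lambda$, $Y\in T_\nu$ with $\lambda\neq\nu$ this reduces the Codazzi identity to the purely algebraic relation
\[
(\lambda-\nu)\,g(\nabla_X Y,Z)=g(\bar R(X,Y)N,Z)
\]
for all tangent $Z$, where $\bar R$ is the ambient curvature. This is precisely where constancy of the principal curvatures is used.

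The core of the argument is then to exploit these algebraic relations across the eigenspaces. I would feed in the type-$(D)$ data by taking $X,Y,Z$ successively from $\mathfrak D^\perp\subset T_\alpha$, from the eigenvalue-$0$ space $T_\gamma=\mathfrak JJTM^\perp$, and from $T_\beta$, using the quaternionic relations together with $JN\perp\mathfrak JN$ to evaluate each $g(\bar R(X,Y)N,Z)$ explicitly. Combining these with the Hopf-type identity obtained by differentiating $A\xi=\alpha\xi$ and applying Codazzi, I expect to obtain rigid eigenvalue relations of the form $\mu\beta-\tfrac{\alpha}{2}(\mu+\beta)=(\text{curvature constant})$ and analogous identities on $T_\gamma$ and $T_\beta$. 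Substituting $\alpha=2\sqrt{-c}$, $\beta=\sqrt{-c}$, $\gamma=0$ and the constraint $\phi T_\mu\subset T_\beta$ should force the curvature constant to a value incompatible with the one computed directly from the ambient tensor, yielding the desired contradiction and thereby excluding case $(D)$.

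The hard part will be the bookkeeping of how the K\"ahler structure $J$ and the three local structures $J_\nu$ permute the four eigenspaces $T_\alpha,T_\beta,T_\gamma,T_\mu$: the curvature terms $g(\bar R(X,Y)N,Z)$ are not in general scalar multiples of a single direction, so each Codazzi relation is a vector identity whose components must be tracked separately, and one must verify that $T_\gamma$ and the possible extra space $T_\mu$ interact consistently with $T_\beta$ under $\phi,\phi_\nu$. Controlling these cross-terms --- in particular showing that no admissible constant value of $\mu$ (including the degenerate case $T_\mu=\{0\}$, in which $T_\beta$ exhausts the orthogonal complement of $JTM^\perp\oplus\mathfrak D^\perp\oplus\mathfrak JJTM^\perp$) can satisfy all the relations simultaneously --- is where the main effort lies.
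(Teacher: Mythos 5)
Your reduction is the same as the paper's: Theorem~\ref{T1.5} makes $M$ Hopf, Theorem~\ref{thm:-} then leaves only case $(D)$ to deal with, and the constancy hypothesis is what must be exploited. The genuine gap is in how you propose to exclude $(D)$. The ``rigid eigenvalue relations'' you expect to harvest from Codazzi --- identities of the shape $2\lambda\mu-\alpha(\lambda+\mu)=\mathrm{const}$ between $\phi$- or $\phi_a$-related principal directions --- are precisely the pointwise algebraic consequences of the Codazzi equation that Berndt and Suh already exhausted when they \emph{derived} case $(D)$, and in that case they are self-consistent. Concretely, with $\alpha=2\sqrt{-c}$ and $\beta=\sqrt{-c}$ one has $2\beta-\alpha=0$, so the Hopf-type identity linking $\mu$ on $T_\mu$ to $\beta$ on $\phi T_\mu\subset T_\beta$ degenerates: $2\mu\beta-\alpha(\mu+\beta)=2c$ holds identically in $\mu$, and no admissible constant is ``incompatible'' at this level. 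Worse, your plan explicitly includes excluding the degenerate case $T_\mu=\{0\}$, but that configuration is \emph{realized} --- it is exactly the horosphere of type $(C_2)$, which appears in the conclusion of the theorem --- so any purely algebraic scheme that rules out all constant $\mu$ including the degenerate one is provably impossible. (A smaller point: your displayed relation $(\lambda-\nu)\,g(\nabla_XY,Z)=g(\bar R(X,Y)N,Z)$ for \emph{all} $Z$ is not what Codazzi gives; the equation involves the antisymmetrization $(\nabla_XA)Y-(\nabla_YA)X$, and eigenvalue-derivative terms cancel only in particular components, as in Lemma~\ref{lem:4}(a),(b).)

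What the paper actually does, and what your proposal is missing, is a genuinely second-order (integrability) argument --- Cartan's method. Constancy of the principal curvatures enters through Lemma~\ref{lem:4}, which converts connection coefficients between distinct eigenbundles into $(\nabla A)$-terms divided by eigenvalue gaps. The paper then computes the sectional-curvature quantity $g(R(X_j,E)E,X_j)$, for $E$ a unit field in $T_{\sqrt{-c}}$ and $X_1,\dots,X_p$ the eigendirections in $\mathcal H$ with $\lambda_j\neq\sqrt{-c}$, in two ways: once via the Gauss equation and once by expanding the second covariant derivatives. Summing the resulting identity over $j$ produces (\ref{eqn:m}), whose left-hand side vanishes because the double sum over $(j,k)$ is antisymmetric under interchange --- this cancellation, not eigenvalue arithmetic, is the engine of the proof. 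The contradiction $4p+8=0$ then comes from the specific choices $E=\phi X_1$ and $E=\phi_bX_1$ summed over $b$, which are legitimate precisely because case $(D)$ of Theorem~\ref{thm:-} guarantees $\phi T_{\lambda_j}\subset T_{\sqrt{-c}}$ and $\phi_aT_{\lambda_j}\subset T_{\sqrt{-c}}$. Your proposal never engages the curvature of the eigendistributions, which is the only place the constancy hypothesis buys more than what Berndt--Suh already used; as written, it would terminate in consistent identities rather than a contradiction.
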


\section{Preliminaries}
In this section, we recall some  fundamental identities for real hypersurfaces in  complex Grassmannian of rank two,
which  have been proven in \cite{berndt-suh, berndt-suh2, lee-loo, looth}. 

Let  $\hat M^m(c)$ be the compact complex Grassmannian $ SU_{m+2}/S(U_2U_m)$ of rank two 
 (resp.  noncompact complex Grassmannian $SU_{2,m}/S(U_2U_m)$ of  rank two) for $c>0$ (resp. $c<0$), 
where $c=\max||K||/8$ is a scaling factor for the Riemannian metric $g$ and $K$ is the sectional curvature for $\hat M^m(c)$.
The Riemannian geometry of $\hat M^m(c)$ were studied in \cite{berndt, berndt-suh, berndt-suh2}.
Denote by $J$ and $\mathfrak J$ the K\"ahler structure $J$ and quarternionic K\"ahler structure on $\hat M^m(c)$ respectively.

Let $M$ be a connected, oriented real hypersurface isometrically immersed in $\hat M^m(c)$, $m\geq 2$, 
and $N$ be a unit normal vector field on $M$. Denote by the same $g$ the Riemannian metric on $M$. 
 The almost contact metric 3-structure $(\phi_a,\xi_a,\eta_a,g)$ on $M$ is given by
$$
J_a X=\phi_a X+\eta_a (X) N,\quad\quad J_a N=-\xi_a,\quad\quad \eta_a(X)=g(X,\xi_a),
$$
for any $X\in TM$, where  $\{J_1,J_2,J_3\}$ is a  canonical local basis  of $\mathfrak J$ on $\hat M^m(c)$.
It follows that 
\begin{align*}
\phi_a\phi_{a+1}-\xi_a\otimes\eta_{a+1}=\phi_{a+2}
\\ 
\phi_a\xi_{a+1}=\xi_{a+2}=-\phi_{a+1}\xi_a  
\end{align*}
for $a\in\{1,2,3\}$. The indices in the preceding equations are taken modulo three. 

The  K\"ahler structure $J$ induces on $M$ an almost contact metric structure by 
\begin{align*}	
JX=\phi X+\eta(X)N,	\quad JN=-\xi, \quad \eta(X)=g(X,\xi). 
\end{align*}
Let $\mathfrak D^\perp=\mathfrak JTM^\perp$, and $\mathfrak D$ its orthogonal complement in $TM$. 
We define a local  $(1,1)$-tensor field $\theta_a$ on $M$  by
\[
\theta_a :=\phi_a\phi -\xi_a\otimes\eta.
\]
Denote by $\nabla$ the Levi-Civita connection on $M$. Then there exist local $1$-forms $q_a$, $a\in\{1,2,3\}$ such that 
\begin{eqnarray}\label{eqn:contact}
\left.\begin{aligned}
& \nabla_X \xi = \phi AX \\
&\nabla_X \xi_a = \phi_a AX+q_{a+2}(X)\xi_{a+1}-q_{a+1}(X)\xi_{a+2} \\
&\nabla_X\phi\xi_a
=\theta_aAX+\eta_a(\xi)AX+q_{a+2}(X)\phi\xi_{a+1} - q_{a+1}(X)\phi\xi_{a+2}. 
\end{aligned}\right\}
\end{eqnarray}
The following identities are known. 
\begin{lemma}[\cite{lee-loo}]
\label{lem:theta}
\begin{enumerate}
\item[(a)] $\theta_a$ is symmetric, 
\item[(b)] $\phi\xi_a=\phi_a\xi$,
\item[(c)] $\theta_a\xi=-\xi_a; \quad \theta_a\xi_a=-\xi; \quad \theta_a\phi\xi_a=\eta(\xi_a)\phi\xi_a$, 
\item[(d)]  $\theta_a\xi_{a+1}= \phi\xi_{a+2}=-\theta_{a+1}\xi_a$,
\item[(e)] 
$-\theta_a\phi\xi_{a+1}+\eta(\xi_{a+1})\phi\xi_a =\xi_{a+2}=\theta_{a+1}\phi\xi_a-\eta(\xi_a)\phi\xi_{a+1}$.
\end{enumerate}
\end{lemma}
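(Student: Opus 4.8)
The plan is to regard each $\theta_a$ as the tangential projection of the ambient endomorphism $J_a\circ J$. A direct computation gives, for $X\in TM$, that $J_aJX=\phi_a\phi X-\eta(X)\xi_a+\eta_a(\phi X)N$, so that the tangential part of $J_aJ$ is exactly $\theta_a=\phi_a\phi-\xi_a\otimes\eta$. The single geometric input I would use is the property, special to the complex Grassmannian of rank two, that the K\"ahler structure $J$ and the quaternionic K\"ahler structure $\mathfrak J$ commute, i.e. $JJ_a=J_aJ$ for a canonical local basis $\{J_1,J_2,J_3\}$. Projecting the equality $J_aJ=JJ_a$ tangentially then yields the two presentations $\theta_a=\phi_a\phi-\xi_a\otimes\eta=\phi\phi_a-\xi\otimes\eta_a$, and this identity is the engine that drives the whole lemma.

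For (a) I would note that $J_aJ$ is self-adjoint on $\hat M^m(c)$, being the composition of the two skew-symmetric commuting operators $J_a$ and $J$; since $\theta_a$ is its tangential restriction, $g(\theta_aX,Y)=g(J_aJX,Y)=g(X,J_aJY)=g(X,\theta_aY)$ for $X,Y\in TM$, giving symmetry. For (b) I would apply the ambient commutativity directly to the unit normal: from $\xi=-JN$ and $\xi_a=-J_aN$ one has $\phi\xi_a=-JJ_aN-\eta(\xi_a)N$ and $\phi_a\xi=-J_aJN-\eta_a(\xi)N$, and since $JJ_aN=J_aJN$ together with $\eta(\xi_a)=\eta_a(\xi)$, the two expressions agree.

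The identities (c)--(e) I would then obtain by substituting the definition of $\theta_a$ and reducing with (b), the three-structure relations $\phi_a\phi_{a+1}-\xi_a\otimes\eta_{a+1}=\phi_{a+2}$ and $\phi_a\xi_{a+1}=\xi_{a+2}=-\phi_{a+1}\xi_a$, and the standard contact identities $\phi\xi=0$, $\phi_a\xi_a=0$, $\phi^2X=-X+\eta(X)\xi$, $\phi_a^2X=-X+\eta_a(X)\xi_a$ (the last two coming from $J^2=-\id$ and $J_a^2=-\id$). For instance $\theta_a\xi=\phi_a\phi\xi-\eta(\xi)\xi_a=-\xi_a$; $\theta_a\xi_a=\phi_a(\phi\xi_a)-\eta(\xi_a)\xi_a=\phi_a^2\xi-\eta(\xi_a)\xi_a=-\xi$ after using (b) and $\eta_a(\xi)=\eta(\xi_a)$; for (d) one writes $\theta_a\xi_{a+1}=\phi_a\phi_{a+1}\xi-\eta(\xi_{a+1})\xi_a$ and applies the first three-structure relation, while the companion $\phi\xi_{a+2}=-\theta_{a+1}\xi_a$ uses the reversed product $\phi_{a+1}\phi_a=-\phi_{a+2}+\xi_{a+1}\otimes\eta_a$; and (e) follows the same pattern, starting from $\theta_a\phi\xi_{a+1}=\phi_a\phi^2\xi_{a+1}=-\phi_a\xi_{a+1}+\eta(\xi_{a+1})\phi_a\xi$ and then invoking $\phi_a\xi_{a+1}=\xi_{a+2}$ and (b).

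The genuinely nontrivial step is the geometric input behind (b): everything else is formal manipulation within the almost contact three-structure. The one place demanding care is the index bookkeeping modulo three together with the accompanying signs in (d) and (e), where one must use both orderings $\phi_a\phi_{a+1}$ and $\phi_{a+1}\phi_a$ of the structure tensors. I would neutralize this by recording the reversed relation $\phi_{a+1}\phi_a=-\phi_{a+2}+\xi_{a+1}\otimes\eta_a$ once at the outset (obtained by adding the symmetric anticommutation relation to $\phi_a\phi_{a+1}=\phi_{a+2}+\xi_a\otimes\eta_{a+1}$), and then applying it symmetrically in the two halves of each statement.
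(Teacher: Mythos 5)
Your proposal is correct: the ambient commutativity $JJ_a=J_aJ$ (valid in $\hat M^m(c)$ by Berndt's work) does make $\theta_a$ the tangential part of the self-adjoint operator $J_aJ$, and all the verifications in (b)--(e) check out, including the sign bookkeeping via the reversed relation $\phi_{a+1}\phi_a=-\phi_{a+2}+\xi_{a+1}\otimes\eta_a$. The paper itself states the lemma without proof, citing \cite{lee-loo}, and your argument is essentially the same as the standard derivation given there, namely projecting the commuting ambient structures $J$ and $J_a$ onto the hypersurface.
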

\begin{lemma}[\cite{lee-loo}]\label{lem:Aphixi_a}
If $\xi\in\mathfrak D$ everywhere, then $A\phi\xi_a=0$ for $a\in\{1,2,3\}$.
\end{lemma}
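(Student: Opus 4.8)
The plan is to translate the hypothesis $\xi\in\mathfrak D$ into the scalar identity $\eta(\xi_a)=g(\xi,\xi_a)=0$, valid for every $a$ since $\mathfrak D^\perp$ is spanned by $\xi_1,\xi_2,\xi_3$, and then to differentiate this identity covariantly and substitute the structure equations \eqref{eqn:contact}. Because the hypothesis holds on all of $M$, I am free to differentiate it in an arbitrary tangent direction.

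First I would fix $a$ and an arbitrary $X\in TM$ and compute
\[
0=X\,g(\xi,\xi_a)=g(\nabla_X\xi,\xi_a)+g(\xi,\nabla_X\xi_a).
\]
Into the first term I substitute $\nabla_X\xi=\phi AX$ from the first line of \eqref{eqn:contact}, and into the second I substitute $\nabla_X\xi_a=\phi_a AX+q_{a+2}(X)\xi_{a+1}-q_{a+1}(X)\xi_{a+2}$ from the second line. The two connection-form terms contribute $q_{a+2}(X)g(\xi,\xi_{a+1})-q_{a+1}(X)g(\xi,\xi_{a+2})$, and both inner products vanish by the hypothesis $\xi\in\mathfrak D$; this is the step where the $q_a$-terms are eliminated and is the reason the argument is short.

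It then remains to rewrite the surviving terms $g(\phi AX,\xi_a)+g(\xi,\phi_a AX)$. Using the skew-symmetry of $\phi$ and of $\phi_a$ with respect to $g$, inherited from the K\"ahler and quaternionic K\"ahler structures of the ambient space, together with Lemma~\ref{lem:theta}(b), namely $\phi\xi_a=\phi_a\xi$, I obtain
\[
g(\phi AX,\xi_a)=-g(AX,\phi\xi_a),\qquad
g(\xi,\phi_a AX)=-g(\phi_a\xi,AX)=-g(AX,\phi\xi_a),
\]
so the two terms reinforce rather than cancel, yielding $-2\,g(AX,\phi\xi_a)=0$. Since $X$ is arbitrary and $A$ is self-adjoint, this forces $g(X,A\phi\xi_a)=0$ for all $X$, hence $A\phi\xi_a=0$, as claimed.

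The only genuine care needed is the bookkeeping of signs: one must confirm that the two skew-symmetry moves produce the same sign, so that the terms add to $-2\,g(AX,\phi\xi_a)$ instead of cancelling. I expect this to be the main (and essentially only) obstacle; everything else is a direct application of \eqref{eqn:contact} and of the orthogonality $\xi\perp\xi_b$ built into the hypothesis.
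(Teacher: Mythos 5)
Your proof is correct and is essentially the standard argument for this lemma (which the paper itself only cites from \cite{lee-loo} without reproducing a proof): differentiate $\eta_a(\xi)=g(\xi,\xi_a)=0$, substitute the first two lines of \eqref{eqn:contact}, eliminate the $q_b$-terms via $\xi\perp\xi_{a+1},\xi_{a+2}$, and combine the surviving terms using skew-symmetry of $\phi,\phi_a$ together with Lemma~\ref{lem:theta}(b) to get $-2g(AX,\phi\xi_a)=0$ for all $X$. Your sign bookkeeping checks out—both terms equal $-g(AX,\phi\xi_a)$, so they add rather than cancel, and self-adjointness of $A$ then gives $A\phi\xi_a=0$.
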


For each $x\in M$,   we define a subspace $\mathcal H^\perp$ of $T_xM$ by
$$\mathcal H^\perp: =\mathrm{span}\{\xi,\xi_1,\xi_2,\xi_3,\phi\xi_1,\phi\xi_2,\phi\xi_3\}.$$
Let $\mathcal{H}$ be the orthogonal complement of $\mathcal {H}^\perp$ in $T_xM$. 
Then  
$\dim\mathcal H=4m-4$  (resp. $\dim\mathcal H=4m-8$) when $\xi\in\mathfrak D^\perp$ (reps. $\xi\notin\mathfrak D^\perp$).
Moreover, $\theta_{a|\mathcal{H}}$ has two eigenvalues: $1$ and $-1$. 
Denote by  $\mathcal H_a(\varepsilon)$ the eigenspace corresponding to the eigenvalue $\varepsilon$ of 
${\theta_a}_{|\mathcal H}$.
Then $\dim \mathcal H_a(1)=\dim \mathcal H_a(-1)$ is even, and 
\begin{align*}
\begin{aligned}
&\phi\mathcal H_a(\varepsilon)=\phi_a\mathcal H_a(\varepsilon)=\theta_a\mathcal H_a(\varepsilon)=\mathcal H_a(\varepsilon) \\
&\phi_b\mathcal H_a(\varepsilon)=\theta_b\mathcal H_a(\varepsilon)=\mathcal H_a(-\varepsilon), \quad (a\neq b).
\end{aligned}
\end{align*}

The equations of Gauss and Codazzi are respectively given by
$$\begin{aligned}
R(X,Y)Z=&g( AY,Z) AX-g( AX,Z) AY+c\{g( Y,Z) X-g(X,Z) Y\\
&+g(\phi Y,Z)\phi X-g(\phi X,Z)\phi Y -2g(\phi X,Y)\phi Z\}\\
&+c\sum_{a=1}^3\{g(\phi_aY,Z)\phi_aX-g(\phi_aX,Z) \phi_aY-2g(\phi_aX,Y)\phi_aZ\\
&+g(\theta_aY,Z)\theta_aX-g(\theta_aX,Z)\theta_aY\}
\end{aligned}$$
\begin{align*}
(\nabla_X A)Y-(\nabla_Y A)X=&c\{\eta(X)\phi Y-\eta(Y)\phi X-2g(\phi X,Y)\xi\}\\
&+c\sum_{a=1}^3 \{\eta_a(X)\phi_a Y-\eta_a(Y)\phi_a X -2g(\phi_a X,Y)\xi_a\\&
+\eta_a(\phi X)\theta_a Y-\eta_a(\phi Y)\theta_a X\}.
\end{align*}



\section{Proof of Theorem \ref{T1.5}}
Under the assumption that $A\mathfrak D^\perp\subset\mathfrak D^\perp$, after having  a suitable choice of canonical local basis $\{J_1,J_2,J_3\}$ for $\mathfrak  J$, the (local) vector fields $\xi_1,\xi_2,\xi_3$ are principal, say $A\xi_a=\beta_a\xi_a$, for $a\in\{1,2,3\}$.
By using the Codazzi equation of such real hypersurfaces, we have 
(cf. \cite{berndt-suh}, \cite{berndt-suh2})
\begin{equation}\label{Long}
\begin{aligned}
\beta_a & g((\phi_a A+   A\phi_a)X,Y)-2g(A\phi_a AX,Y)\\
	-	&(\beta_a-\beta_{a+1})q_{a+2}(\xi_a)\{\eta_a(X) \eta_{a+1}(Y)-\eta_{a+1}(X)\eta_a(Y)\}\\
	-&(\beta_a-\beta_{a+2})q_{a+1}(\xi_a)\{\eta_{a+2}(X)\eta_a(Y)-\eta_a(X)\eta_{a+2}(Y)\}\\
=&c\big(-2\eta(\xi_a)g(\phi X,Y)-2g(\phi_a X,Y)+2\eta(X)\eta_a(\phi Y)-2\eta(Y)\eta_a(\phi X)\\
  &+2\eta_{a+1}(X)\eta_{a+2}(Y)-2\eta_{a+1}(Y)\eta_{a+2}(X)	\\
	&+2\eta_{a+1}(\phi X)\eta_{a+2}(\phi Y)-2\eta_{a+1}(\phi Y)\eta_{a+2}(\phi X)\\
	& +2\eta_a (Y)\left\{2\eta(\xi_a)\eta_a(\phi X)-\eta(\xi_{a+1})\eta_{a+1}(\phi X)-\eta(\xi_{a+2})\eta_{a+2}(\phi X)\right\}\\
	& -2\eta_a (X)\left\{2\eta(\xi_a)\eta_a(\phi Y)-\eta(\xi_{a+1})\eta_{a+1}(\phi Y)-\eta(\xi_{a+2})\eta_{a+2}(\phi Y)\right\}\\
	&-(\beta_a-\beta_{a+1})\{q_{a+2}(X)\eta_{a+1}(Y)-q_{a+2}(Y)\eta_{a+1}(X)\}\\
	&+(\beta_a-\beta_{a+2})\{q_{a+1}(X)\eta_{a+2}(Y)-q_{a+1}(Y)\eta_{a+2}(X)\}\big)
\end{aligned}
\end{equation}
for all $X,Y$ tangent to $M$.

We consider two cases: 
(i) 	$\xi\in\mathfrak D^\perp$ everywhere, and 
(ii) $\xi\notin\mathfrak D^\perp$ at some points of $M$.

\textbf{Case (i): } Suppose $\xi\in\mathfrak D^\perp$ everywhere.
For  each $x\in M$, since $\mathfrak D$ is invariant under $A$, $\phi$ and $\phi_a$, from  (\ref{Long}), we have 
\begin{align*}
&2c\{\eta(\xi_a)\phi X+\phi_a X\}+\beta_a (\phi_a A+A\phi_a)X-2A\phi_a AX\\
	&=-(\beta_a-\beta_{a+1})q_{a+2}(X)\xi_{a+1}+(\beta_a-\beta_{a+2})q_{a+1}(X)\xi_{a+2}.
\end{align*}
for all $X\in\mathfrak D$ and $a\in\{1,2,3\}$. 
It follows that 
\begin{align}\label{Big}
2c\{\eta(\xi_a)\phi X+\phi_a X\}+\beta_a (\phi_a A+A\phi_a)X-2A\phi_a AX=0\\
(\beta_a-\beta_{a+1})q_{a+2}(X)=0 \notag
\end{align}
for all $X\in\mathfrak D$ and $a\in\{1,2,3\}$. 
Note that if $X\in\mathfrak D$, then $\phi_a X\in\mathfrak D$. Next, applying  $\phi_a$ on both sides of (\ref{Big}) and replacing $X$ by $\phi_a X$ in (\ref{Big}) give
\begin{align}\label{Big-2}
2c\{\eta(\xi_a)\theta_a X-X\}-\beta_a AX+\beta_a\phi_a A\phi_a X-2\phi_a A\phi_a AX=0
\end{align}
and
$$2c\{\eta(\xi_a)\theta_a X-X\}+\beta_a\phi_a A\phi_a X-\beta_a AX-2A\phi_a A\phi_a X=0$$
respectively.
Hence, 
\begin{align}\label{commute}
(\phi_a A\phi_a) AX=A(\phi_a A\phi_a) X, \quad a\in\{1,2,3\}
\end{align}
for all $X\in\mathfrak D$.
With the assumption that $\xi\in\mathfrak D^{\perp}$, we have $\mathfrak D=\mathcal{H}$ and $A\mathcal{H}\subset\mathcal{H}$. By 
 (\ref{commute}), there exist common orthonormal eigenvectors $X_1,\cdots, X_{4m-4}\in\mathcal{H}$ of $A$ and $\phi_1 A \phi_1$.
It follows that
$AX_j=\lambda_j X_j$  and  $ A\phi_1 X_j=\mu_j\phi_1 X_j$.
Using these in (\ref{Big-2}), we have
$$
2c\eta(\xi_1)\theta_1 X_j-(2c+\lambda_j\beta_1+\mu_j\beta_1-2\lambda_j\mu_j)X_j=0.
$$
Since $\xi\in\mathfrak D^{\perp}$, we may suppose $\eta(\xi_1)\neq 0$. Then
$$
\theta_1 X_j-\varepsilon X_j=0.
$$
where 
$\varepsilon=(2c+\beta_1(\lambda_j+\mu_j)-2\lambda_j\mu_j)/2c\eta(\xi_1)$.
It follows that  $\varepsilon\in\{1,-1\}$. Without loss of generality, we can assume that 
$$
X_1,\cdots, X_{2m-2}\in\mathcal{H}_1(1) \quad\quad\text{ and }\quad\quad X_{2m-1},\cdots, X_{4m-4}\in\mathcal{H}_1(-1).
$$
Consequently, 
$
A\mathcal{H}_1(1)\subset\mathcal{H}_1(1)
$
and hence
$
\phi_2 A\phi_2\mathcal{H}_1(1)\subset\mathcal{H}_1(1).
$
Thus, if we take $a=2$ in (\ref{commute}), then there exist orthonormal vectors 
$\tilde  X_1,\cdots, \tilde  X_{2m-2}\in\mathcal{H}_1(1)$ 
such that
$A\tilde  X_j=\tilde \lambda_j\tilde  X_j$ 
 and
$A\phi_2\tilde  X_j=\tilde \mu_j\phi_2\tilde  X_j$.
From (\ref{Big-2}), we have
$$
2c\eta(\xi_2)\theta_2 \tilde  X_j-(2c+\tilde \lambda_j\beta_2+\tilde \mu_j\beta_2-2\tilde \lambda_j\tilde \mu_j) \tilde  X_j=0.
$$
Since $\tilde  X_j\in\mathcal{H}_1(1)$  and $\theta_2\tilde  X_j\in\mathcal{H}_1(-1)$, we have
$
\eta(\xi_2)=0.
$
In a similar manner, we obtain $\eta(\xi_3)=0$. Thus, we have $\eta(\xi_1)=\pm 1$ or $\xi=\pm\xi_1$.
As a result, we have shown that $A\xi=\beta_1\xi$ at each $x\in M$. Hence $M$ is Hopf.

\medskip
\textbf{Case (ii): }
Suppose that $\xi\notin\mathfrak D^\perp$ at a point $x\in M$.
Since $\mathfrak D$ is invariant under $A$  and $\phi_a$,  after letting $Y\in\mathfrak D$ and $X=\xi_{a+1}$ in  (\ref{Long}), we have 
\begin{align}\label{eqn:I}
(\beta_a-\beta_{a+1})q_{a+2}(Y)=cg\big(2\eta_{a+1}(\xi)\phi\xi_a+4\eta_a(\xi)\phi\xi_{a+1}+2\eta_{a+2}(\xi)\xi,Y\big).
\end{align}
Similarly, if we let $Y\in\mathfrak D$ and $X=\xi_{a+2}$ in (\ref{Long}), then 
\begin{align*}
(\beta_{a+2}-\beta_a)q_{a+1}(Y)=cg\big(4\eta_a(\xi)\phi\xi_{a+2}+2\eta_{a+2}(\xi)\phi\xi_a-2\eta_{a+1}(\xi)\xi,Y\big).
\end{align*}
Raising the index of this equation by one gives
\begin{align}\label{eqn:II}
(\beta_a-\beta_{a+1})q_{a+2}(Y)=cg\big(4\eta_{a+1}(\xi)\phi\xi_a+2\eta_a(\xi)\phi\xi_{a+1}-2\eta_{a+2}(\xi)\xi,Y\big).
\end{align}
Denote by $P$ the projection from $T_xM$ onto $\mathfrak D$.
Since $Y$ is an arbitrary vector in $\mathfrak D$, by (\ref{eqn:I}) and (\ref{eqn:II}), we obtain
\[
P\left(2\eta_{a+2}(\xi)\xi-\eta_{a+1}(\xi)\phi\xi_a+\eta_a(\xi)\phi\xi_{a+1}\right)=0
\]
which implies that
\begin{align*}
2\eta_{a+2}(\xi)\xi &-\eta_{a+1}(\xi)\phi\xi_a+\eta_a(\xi)\phi\xi_{a+1}	\\
   &- \sum^3_{b=1}g\left(2\eta_{a+2}(\xi)\xi-\eta_{a+1}(\xi)\phi\xi_a+\eta_a(\xi)\phi\xi_{a+1},\xi_b\right)\xi_b=0.
\end{align*}
Since $\{\xi,\xi_1,\xi_2,\xi_3,\phi\xi_1,\phi\xi_2,\phi\xi_3\}$ is linear independent, $\eta_a(\xi)=0$ for $a\in\{1,2,3\}$
which means that $\xi\in\mathfrak D$ at $x\in M$.
Hence,  we conclude that $\xi\in\mathfrak D$ everywhere by the connectedness of $M$ and the continuity of $\sum^3_{a=1}\eta_a(\xi)^2$.

Since $\eta_a(\xi)=0$ for $a\in\{1,2,3\}$, the equations (\ref{Long}) and (\ref{eqn:I}) give
\begin{align*}
\begin{aligned}
\beta_a(\phi_a A+A\phi_a)X-2A\phi_a AX
=&2c\{-\phi_a X-\eta(X)\phi_a\xi-\eta(\phi_a X)\xi\\
	&-\eta_{a+1}(\phi X)\phi\xi_{a+2}+\eta_{a+2}(\phi X)\phi\xi_{a+1}\}
\end{aligned}
\end{align*}
for all $X\in\mathfrak D$.
First, acting $\phi_a$ on both sides of  the preceding equation and next  
replacing $X$ by $\phi_a X$ in  the preceding equation give
\begin{align}\label{Long-2}
\begin{aligned}
\beta_a(-A+\phi_a A\phi_a)X & -2\phi_a A\phi_a AX
=2c\{X+\eta(X)\xi-\eta(\phi_a X)\phi_a\xi\\
  & +\eta(\phi_{a+1} X)\phi_{a+1}\xi+\eta(\phi_{a+2}X)\phi_{a+2}\xi \}  
\end{aligned}
\end{align}
and 
\begin{align*}
\begin{aligned}
\beta_a(\phi_a A\phi_a-A)X & -2A\phi_a A\phi_a X
=2c\{X -\eta(\phi_a X)\phi_a\xi+\eta(X)\xi\\
  & +\eta(\phi_{a+2}X)\phi_{a+2}\xi +\eta(\phi_{a+1} X)\phi_{a+1}\xi\}  
\end{aligned}
\end{align*}
for all $X\in\mathfrak D$. 
Hence $(\phi_aA\phi_a)AX=A(\phi_aA\phi_a)X$ for $X\in\mathfrak D$.
Since $A\mathfrak D\subset\mathfrak D$ and $\phi_aA\phi_a\mathfrak D\subset\mathfrak D$, there exists an orthonormal basis 
$
\{X_1,\cdots,X_{4m-4}\}
$
for $\mathfrak D$ such that $AX_j=\lambda_jX_j$ and $A\phi_aX_j=\mu_j\phi_aX_j$.
Take an arbitrary $j\in\{1,2,\cdots,4m-4\}$ and substitute $X=X_j$ in (\ref{Long-2}), we obtain
\begin{align*}
\begin{aligned}
\{2\lambda_j\mu_j-\beta_a(\lambda_j+\mu_j)\}X_j=&
2c\{X_j+\eta(X_j)\xi-\eta(\phi_a X_j)\phi_a\xi\\
  & +\eta(\phi_{a+1} X_j)\phi_{a+1}\xi+\eta(\phi_{a+2}X_j)\phi_{a+2}\xi \} . 
\end{aligned}
\end{align*}
Let $X_j^H$ be  the $\mathcal H$-component of $X_j$. Then the $\mathcal H$- and $\xi$-components of the preceding equations are given respectively by 
\begin{align*}
\begin{aligned}
&0=\{2\lambda_j\mu_j-\beta_a(\lambda_j+\mu_j)-2c\}X_j^H\\
&0=\{2\lambda_j\mu_j-\beta_a(\lambda_j+\mu_j)-4c\}\eta(X_j).
\end{aligned}
\end{align*}
For each $j\in\{1,2,\cdots,4m-4\}$ with $X_j^H\neq0$, 
the former of these equations implies that  $2\lambda_j\mu_j-\beta_a(\lambda_j+\mu_j)=2c$. This, together with the latter, gives $\eta(X_j)=0$. This means that $A\xi\perp\mathcal H$.
Since $A\phi\xi_a=0$  for $a\in\{1,2,3\}$  by Lemma~\ref{lem:Aphixi_a},
 we obtain $\xi$ is principal on $M$. 
This completes the proof.


\section{Proof of Theorem~\ref{T2}}
It is clear that we only need to consider the case   $\xi\in\mathfrak D$ everywhere.
Denote the spectrum of $A_{|\mathcal H}$ by $\spec$.
For each $\lambda\in\spec$, denote by $T_{\lambda}$ the subbundle of $\mathcal H$ foliated by eigenspaces of  $A_{|\mathcal H}$
corresponding to $\lambda$.
We will use Cartan's method to prove the result, and begin with some fundamental identities.
\begin{lemma}\label{lem:4}
For any $\lambda$, $\mu\in\spec$,  if   $\lambda\neq\mu$, then
\begin{enumerate}
\item[(a)] $(\lambda-\mu)g(\nabla_XY,V)  =g((\nabla_XA)Y,V)$
\item[(b)] $\nabla_YZ~ \perp ~ T_{\mu}$
\item[(c)]   $g(\nabla_{[Y,V]}Y,V)=g(\nabla_YV,\nabla_VY)+c\left(g(\phi Y,V)^2
+\sum^3_{a=1}\{g(\phi_a Y,V)^2+g(\theta_a Y,V)^2\}\right)$
\end{enumerate}
for all vector fields $Y,Z$ tangent to $T_\lambda$, $V$ tangent to $T_{\mu}$ and $X\in TM$.
\end{lemma}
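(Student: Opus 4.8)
The plan is to prove (a) by a direct computation, to use (a) to derive (b) through the Codazzi equation, and to combine (a), (b) with the Gauss equation for (c). Throughout, recall that $Y,Z\in T_\lambda$ and $V\in T_\mu$ lie in $\mathcal H$, that $\lambda,\mu$ are constant, and that in the case under consideration $\xi\in\mathfrak D$, so that $\eta(W)=\eta_a(W)=\eta_a(\phi W)=0$ for every $W\in\mathcal H$ and every $a$.

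For (a), since $AY=\lambda Y$ with $\lambda$ constant, I would write $(\nabla_XA)Y=\nabla_X(AY)-A\nabla_XY=\lambda\nabla_XY-A\nabla_XY$. Pairing with $V$ and using that $A$ is self-adjoint together with $AV=\mu V$ gives $g((\nabla_XA)Y,V)=\lambda g(\nabla_XY,V)-g(\nabla_XY,AV)=(\lambda-\mu)g(\nabla_XY,V)$, which is (a). Running the same computation with two vectors from the \emph{same} eigenspace yields the auxiliary identity $g((\nabla_XA)Y,Z)=0$ for all $Y,Z\in T_\lambda$ and all $X$, which will drive (b).

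For (b), I would insert this auxiliary identity into the Codazzi equation. Write the Codazzi equation as $(\nabla_XA)W-(\nabla_WA)X=c\,\Omega(X,W)$, where $\Omega(X,W)$ denotes its right-hand side. Taking $X=V$, $W=Y$ and pairing with $Z\in T_\lambda$ gives $g((\nabla_VA)Y,Z)-g((\nabla_YA)V,Z)=c\,g(\Omega(V,Y),Z)$. The first term vanishes by the auxiliary identity; since $\nabla_YA$ is self-adjoint, the second equals $g((\nabla_YA)Z,V)$. Because $\xi\in\mathfrak D$, each summand of $\Omega(V,Y)$ carries a factor $\eta(\cdot)$ or $\eta_a(\cdot)$ that vanishes on $\mathcal H$, or is a multiple of $\xi$ or $\xi_a$; hence $g(\Omega(V,Y),Z)=0$ for $Z\in\mathcal H$. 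Therefore $g((\nabla_YA)Z,V)=0$, and (a) gives $(\lambda-\mu)g(\nabla_YZ,V)=0$, so $\nabla_YZ\perp T_\mu$, which is (b).

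For (c), I would start from $R(Y,V)Y=\nabla_Y\nabla_VY-\nabla_V\nabla_YY-\nabla_{[Y,V]}Y$ and pair with $V$ to isolate $g(\nabla_{[Y,V]}Y,V)$. Metric compatibility rewrites $g(\nabla_Y\nabla_VY,V)$ and $g(\nabla_V\nabla_YY,V)$ in terms of first covariant derivatives, and here (b) is used twice—applied to $T_\lambda$ and to $T_\mu$—to kill the terms $g(\nabla_VY,V)$ and $g(\nabla_YY,V)$. Substituting the Gauss equation for $g(R(Y,V)Y,V)$ then produces the intrinsic term $g(\nabla_YV,\nabla_VY)$ together with the curvature contributions. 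The precise ambient terms in the statement arise from the $\mathcal H^\perp$-components of the first derivatives: using (\ref{eqn:contact}) (with the $\eta_a(\xi)$-terms dropping because $\xi\in\mathfrak D$) and Lemmas~\ref{lem:theta}, \ref{lem:Aphixi_a}, one computes $g(\nabla_VY,\xi)=\mu\,g(\phi Y,V)$, $g(\nabla_VY,\xi_a)=\mu\,g(\phi_aY,V)$ and $g(\nabla_VY,\phi\xi_a)=-\mu\,g(\theta_aY,V)$, together with the analogous identities with $\mu$ replaced by $\lambda$ for $\nabla_YV$. Collecting these against the Gauss curvature terms yields (c). I expect this final bookkeeping to be the main obstacle: one must verify that the eigenvalue-dependent contributions from the $\mathcal H$-components (governed by (a)) and from the $\mathcal H^\perp$-components above combine with the Gauss terms so that only $g(\nabla_YV,\nabla_VY)$ and the displayed $c$-terms remain.
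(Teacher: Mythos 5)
Your parts (a) and (b) are correct and coincide with the paper's argument: the paper dismisses (a) as trivial (your direct computation is the intended one), and its proof of (b) is exactly your argument — the same-eigenspace instance of (a) fed into the Codazzi equation, whose ambient right-hand side vanishes when paired against $\mathcal H$-vectors because $\xi\in\mathfrak D$.

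Part (c), however, rests on the wrong tool, and the gap is not mere bookkeeping. Identity (c) is a \emph{Codazzi}-type identity: the paper proves it by applying (a) with $X=[Y,V]$ to get $(\mu-\lambda)g(\nabla_{[Y,V]}V,Y)=g((\nabla_{[Y,V]}A)V,Y)$, expanding $[Y,V]=\nabla_YV-\nabla_VY$, and using the symmetry of $\nabla A$ together with the Codazzi equation to swap the differentiation direction; the $c$-terms in (c) come from the right-hand side of Codazzi evaluated on $(\nabla_YV,V)$ and $(\nabla_VY,Y)$, at which point formulas like your $\eta(\nabla_YV)=\lambda g(\phi V,Y)$ finish the job. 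The Gauss equation never enters. Your route instead expands the left side through the definition of $R$: using (b) twice (as you say) one gets
\begin{equation*}
g(\nabla_{[Y,V]}Y,V)=-g(\nabla_VY,\nabla_YV)+g(\nabla_YY,\nabla_VV)-g(R(Y,V)Y,V),
\end{equation*}
in which the mixed term carries the \emph{opposite} sign to the one in (c), and the extra terms $g(\nabla_YY,\nabla_VV)$ and $g(R(Y,V)Y,V)$ appear. Your $\mathcal H^\perp$-component formulas are correct, and together with (b) they even evaluate $g(\nabla_YY,\nabla_VV)=\lambda\mu\sum^3_{a=1}g(\theta_aY,Y)g(\theta_aV,V)$, since the $\mathcal H$-components of $\nabla_YY$ and $\nabla_VV$ lie in the orthogonal distributions $T_\lambda$ and $T_\mu$. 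But after substituting the Gauss equation for $g(R(Y,V)Y,V)$ (for unit $Y,V$) you arrive at
\begin{equation*}
g(\nabla_{[Y,V]}Y,V)=-g(\nabla_VY,\nabla_YV)+\lambda\mu+c+3c\,g(\phi Y,V)^2+\cdots,
\end{equation*}
whose right-hand side differs from that of (c) by $2g(\nabla_YV,\nabla_VY)$ plus shape and curvature terms such as $\lambda\mu+c$ and the $\theta_a$-diagonal products. To close this gap you would have to compute $g\bigl((\nabla_YV)^{\mathcal H},(\nabla_VY)^{\mathcal H}\bigr)$ independently, and this quantity is simply not determined by the Gauss equation: it consists precisely of the free connection coefficients that (c) is designed to control via Codazzi. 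What your route actually proves is the identity obtained by \emph{combining} (c) with the Gauss equation — which is exactly the key displayed equation in the paper's proof of Theorem~\ref{T2} — so invoking it to establish (c) is circular. Your preliminary $\mathcal H^\perp$-formulas are salvageable (they are implicitly the last step of the paper's computation), but the engine of the proof of (c) must be the Codazzi equation applied to $(\nabla_{[Y,V]}A)V$, not the Gauss equation.
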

\begin{proof}
Statement  (a) is trivial. Next, by the Codazzi equation and (a), we have 
\begin{align*}
0=&g((\nabla_YA)V,Z)-g((\nabla_VA)Y,Z)\\
=&(\lambda-\mu)g(\nabla_YZ,V).
\end{align*}
This gives Statement (b).
Similarly,  with the help of the Codazzi equation and  (\ref{eqn:contact}), we compute
\begin{align*}
(\mu-&\lambda)g(\nabla_{[Y,V]}V,Y)\\
=&g((\nabla_{[Y,V]}A)V,Y)\\
=&g((\nabla_{\nabla_YV}A)V,Y)-g((\nabla_{\nabla_VY}A)Y,V)\\
=&g((\nabla_VA)Y,\nabla_YV)-g((\nabla_YA)V,\nabla_VY)\\
   &+c\left(\eta(\nabla_YV)g(\phi V,Y)+\sum^3_{a=1}\{
\eta_a(\nabla_YV)g(\phi_a V,Y)+\eta(\phi_a\nabla_YV)g(\theta_a V,Y)\}\right) \\
   &-c\left(\eta(\nabla_VY)g(\phi Y,V)+\sum^3_{a=1}\{
\eta_a(\nabla_VY)g(\phi_a Y,V)+\eta(\phi_a\nabla_VY)g(\theta_a Y,V)\}\right) \\
=&(\lambda-\mu)g(\nabla_VY,\nabla_YV)
     +(\lambda-\mu)c\left(g(\phi Y,V)^2
+\sum^3_{a=1}\{g(\phi_a Y,V)^2+g(\theta_a Y,V)^2\}\right).
\end{align*}
Hence we obtain Statement (c).
\end{proof}

 If $\sqrt{-c}\notin\spec$, then $M$ is an open part of real hypersurfaces of type $A$, $B$ and $C_1$.
Next, consider the case  $\sqrt{-c}\in\spec$. We claim that $\spec=\{\sqrt{-c}\}$.
Suppose to the contrary that $\spec\neq\{\sqrt{-c}\}$.
Let $\dim T_{\sqrt{-c}}=4m-8-p$, $p>0$ and $\{X_1,\cdots,X_p\}$ be a local orthonormal frame on $\mathcal H$ with $AX_j=\lambda_jX_j$,
where $\lambda_j\neq\sqrt{-c}$ for each $j\in\{1,\cdots,p\}$.
Take a unit vector field $E$ tangent to $T_{\sqrt{-c}}$, we have

\begin{align*}
&c\left(3g(\phi E,X_j)^2+\sum^3_{a=1}\{3g(\phi_a E,X_j)^2-g(\theta_aE,X_j)^2
+g(\theta_aE,E)g(\theta_aX_j,X_j)\}\right)\\
&+c+\lambda_j\sqrt{-c}\\
=&g(R(X_j,E)E,X_j)\\
=&g(\nabla_{X_j}\nabla_EE,X_j)-g(\nabla_E\nabla_{X_j}E,X_j)-g(\nabla_{[X_j,E]}E,X_j)\\
=&-g(\nabla_EE,\nabla_{X_j}X_j)+g(\nabla_{X_j}E,\nabla_EX_j)-g(\nabla_{[X_j,E]}E,X_j)\\
=&-\lambda\sqrt{-c}\sum^3_{a=1}g(\theta_aE,E)g(\theta_aX_j,X_j)+2g(\nabla_{X_j}E,\nabla_EX_j)\\
   &+c\left(g(\phi E,X_j)^2+\sum^3_{a=1}\{g(\phi_aE,X_j)^2+g(\theta_aE,X_j)^2\}\right)\\
=&-\lambda\sqrt{-c}\sum^3_{a=1}g(\theta_aE,E)g(\theta_aX_j,X_j)
			+2\sum^p_{\stackrel{k=1}{\lambda_k\neq\lambda_j}}
			g(\nabla_{X_j}E,X_k)g(\nabla_EX_j,X_k)						\\
   &-2\lambda\sqrt{-c}\left(g(\phi E,X_j)^2+\sum^3_{a=1}\{g(\phi_aE,X_j)^2-g(\theta_aE,X_j)^2\}\right)\\
	 &+c\left(g(\phi E,X_j)^2+\sum^3_{a=1}\{g(\phi_aE,X_j)^2+g(\theta_aE,X_j)^2\}\right).
\end{align*}
It follows that 
\begin{align*}
(c+&\lambda_j\sqrt{-c})\Big(1+2(g(\phi E,X_j)^2 \\
&+\sum^3_{a=1}\{2g(\phi_a E,X_j)^2-2g(\theta_aE,X_j)^2
+g(\theta_aE,E)g(\theta_aX_j,X_j)\}\Big)\\
=&2\sum^p_{\stackrel{k=1}{\lambda_k\neq\lambda_j}}
		g(\nabla_{X_j}E,X_k)g(\nabla_EX_j,X_k)	\\
=&2\sum^p_{\stackrel{k=1}{\lambda_k\neq\lambda_j}}
			\frac{g((\nabla_{X_j}A)E,X_k)}{\sqrt{-c}-\lambda_k}
											\frac{g(\nabla_EA)X_j,X_k)}{\lambda_j-\lambda_k}\\
=&2\sum^p_{\stackrel{k=1}{\lambda_k\neq\lambda_j}}
		\frac{g((\nabla_{X_k}A)E,X_j)^2}{(\sqrt{-c}-\lambda_k)(\lambda_j-\lambda_k)}.
\end{align*}
By applying the Codazzi equation and the preceding equation, we have

\begin{align}\label{eqn:m}
0=&2\sum^p_{j=1}\sum^p_{\stackrel{k=1}{\lambda_k\neq\lambda_j}}
		\frac{g((\nabla_{X_k}A)E,X_j)^2}{(\lambda_j-\sqrt{-c})(\sqrt{-c}-\lambda_k)(\lambda_j-\lambda_k)}   \notag\\
=&
\sum^p_{j=1}\frac{c+\lambda_j\sqrt{-c}}{\lambda_j-\sqrt{-c}}
			\Big(1+2(g(\phi E,X_j)^2																\notag\\
			&+\sum^3_{a=1}\{2g(\phi_a E,X_j)^2-2g(\theta_aE,X_j)^2
			+g(\theta_aE,E)g(\theta_aX_j,X_j)			\}\Big) \notag\\
=&\sum^p_{j=1}\Big(1+2(g(\phi E,X_j)^2						\notag\\
			&+\sum^3_{a=1}\{2g(\phi_a E,X_j)^2-2g(\theta_aE,X_j)^2+g(\theta_aE,E)g(\theta_aX_j,X_j)
\}\Big).
\end{align}
By Theorem~\ref{thm:-}, we know that $\phi T_{\lambda_j}\subset T_{\sqrt{-c}}$ and $\phi_aT_{\lambda_j}\subset T_{\sqrt{-c}}$.
If  we take $E=\phi X_1$, then $\theta_a E=-\phi_aX_1$ is tangent to $T_{\sqrt{-c}}$. 
Hence,  the equation (\ref{eqn:m}) reduces to
\begin{align}\label{eqn:m2}
0=&p+2+\sum^p_{j=1}\sum^3_{a=1}\left\{2g(\theta_a X_1,X_j)^2+g(\theta_a X_1,X_1)g(\theta_aX_j,X_j)\right\}.
\end{align}
Fixed $b\in\{1,2,3\}$, by substituting $E=\phi_bX_1$ in (\ref{eqn:m}), we obtain
\begin{align*}
0=&p+2+\sum^p_{j=1}\Big\{2g(\theta_bX_1,X_j)^2-2g(\theta_{b+1}X_1,X_j)^2-2g(\theta_{b+2}X_1,X_j)^2\\
&+g(\theta_{b}X_1,X_1)g(\theta_{b}X_j,X_j)\\
&-g(\theta_{b+1}X_1,X_1)g(\theta_{b+1}X_j,X_j)-g(\theta_{b+2}X_1,X_1)g(\theta_{b+2}X_j,X_j)
\Big\}.
\end{align*}
By summing up over $b$, we obtain
\begin{align}\label{eqn:m3}
0=&3p+6-\sum^p_{j=1}\sum^3_{b=1}\Big\{2g(\theta_bX_1,X_j)^2+g(\theta_{b}X_1,X_1)g(\theta_{b}X_j,X_j)
\Big\}.
\end{align}
Summing up (\ref{eqn:m2}) and (\ref{eqn:m3}) gives $4p+8=0$; a contradiction. Accordingly, $\spec=\{\sqrt{-c}\}$ and hence $M$ is an open part of real hypersurfaces of type $C_2$.

\end{document}